\numberwithin{equation}{section}
\numberwithin{figure}{section}
\theoremstyle{plain}
\newtheorem{thm}{Theorem}[section]
\newtheorem{lem}[thm]{Lemma}
\theoremstyle{definition}
\newtheorem{ex}[thm]{Example}
\theoremstyle{remark}
\newtheorem{rem}[thm]{Remark}
\begin{document}
\address[Hideki Murahara]{The University of Kitakyushu,  4-2-1 Kitagata, Kokuraminami-ku, Kitakyushu, Fukuoka, 802-8577, Japan}
\email{hmurahara@mathformula.page}

\address[Tomokazu Onozuka]{Institute of Mathematics for Industry, Kyushu University 744, Motooka, Nishi-ku, Fukuoka, 819-0395, Japan} \email{t-onozuka@imi.kyushu-u.ac.jp}

\title[Asymptotic behavior of the Hurwitz-Lerch MZF at non-positive integer points]{Asymptotic behavior of the Hurwitz-Lerch multiple zeta function at non-positive integer points}

\author{Hideki Murahara}
\author{Tomokazu Onozuka}

\begin{abstract}
We give a result on the asymptotic behavior of the Hurwitz-Lerch multiple
zeta functions near non-positive integer points by using the Apostol-Bernoulli polynomials. 
From this result, we can evaluate limit values at non-positive integer points. 
\end{abstract}

\subjclass[2010]{Primary 11M32}
\keywords{Hurwitz-Lerch zeta functions, Hurwitz-Lerch multiple zeta functions,
analytic continuation, asymptotic behavior}
\maketitle

\section{Introduction}
Let $a_{1},\dots,a_{r},z_{1},\ldots,z_{r}\in\mathbb{C}$ be parameters with $\Re(a_{1}),\Re(a_{1}+a_2),\dots,\Re(a_1+\cdots+a_{r})>0$, $|z_{1}|,\ldots,|z_{r}|\le1$, and $z_1,\ldots,z_r\neq0$. 
For $s_{1},\dots,s_{r}\in\mathbb{C}$ with $\Re(s_1),\ldots,\Re(s_r)>1$, the Hurwitz-Lerch multiple
zeta functions are defined by 
\begin{align*}
\zeta(s_{1},\ldots,s_{r};a_{1},\ldots,a_{r};z_{1},\ldots,z_{r}) & :=\sum_{0\le m_{1},\ldots,m_{r}}\frac{z_{1}^{m_{1}}\cdots z_{r}^{m_{r}}}{(m_{1}+a_{1})^{s_{1}}\cdots(m_{1}+\cdots+m_{r}+a_{1}+\cdots+a_{r})^{s_{r}}},
\end{align*}
where we set $-\pi<\arg(m_1+\cdots+m_j+a_1+\cdots+a_j)\le\pi$ for $1\le j\le r$. The function $\zeta(s;a;z)$
was studied by Lipschitz \cite{Lip57,Lip89} and Lerch \cite{Ler87}. It is known that if we fix $s$ which is not a positive integer, $\zeta(s;a;z)$ can be continued to $\mathbb{C}\setminus[1,\infty)$ as a function of $z$, and for a fixed $z\in\mathbb{C}\setminus[1,\infty)$, $\zeta(s;a;z)$ can be continued as a function of $s$ to the whole complex plain except for the possible simple poles at $s=1,2,3,\ldots$ (for details, see \cite{EMOT1}). 
Special values at non-positive integers are given by Apostol \cite{Apo51}. 
When $|z|=1$, he showed 
\begin{align*}
\zeta(-n;a;z) & =-\frac{B_{n+1}(a;z)}{n+1}
\end{align*}
for a non-negative integer $n$, where $B_{n+1}(a;z)$ is the Apostol-Bernoulli polynomial 
defined by the generating function
\[
\frac{xe^{ax}}{ze^{x}-1}=\sum_{n\ge0}B_{n}(a;z)\frac{x^{n}}{n!}.
\]
Note that $B_n(a;1)=B_n(a)$ is the Bernoulli polynomial and $(-1)^nB_n(1,1)=B_n$ is the Bernoulli number.

The Hurwitz-Lerch multiple zeta functions have been studied by many authors. 
In \cite{Kam06}, Kamano investigate the function with $z_1=\cdots=z_r=1$ and those values at non-positive integer points.
In \cite{EM20,EM21}, Essouabri and Matsumoto also studied the function that generalizes its denominators. 
In \cite{FKMT17}, Furusho, Komori, Matsumoto, and Tsumura showed some interesting analytic properties 
and defined a desingularization analogue. 
In \cite{Kom09}, Komori studied the function and gave an  analytic continuation and limit values at non-positive integer points. 
Here, motivated by the second-named author's work \cite{Onozuka13}, we shall show the asymptotic behavior of the Hurwitz-Lerch multiple zeta function at non-positive integer points. 

For  integers $i$ and $r$ with $1\le i\le r$,
put $n(i,r):=n_{i}+\cdots+n_{r}$, $l(i,r):=l_{i}+\cdots+l_{r}$,
and $\epsilon(i,r):=\epsilon_{i}+\cdots+\epsilon_{r}$. For $j=1,\dots,r-1$
and $d_{1},\dots,d_{r-1}\in\{0,1\}$, set 
\begin{align*}
S_{j}^{(d_{j})} & =S_{j}^{(d_{j})}(l_{1},\dots,l_{r})\\
 & :=\begin{cases}
\{(n_{1},\dots,n_{r})\in\mathbb{Z}_{\ge0}^{r}\mid n(j+1,r)\le l(j+1,r)+(r-j),n(1,r)=l(1,r)+r\} & \text{if }d_{j}=0,\\
\{(n_{1},\dots,n_{r})\in\mathbb{Z}_{\ge0}^{r}\mid l(j,r)+(r-j)<n(j+1,r),n(1,r)=l(1,r)+r\} & \text{if }d_{j}=1
\end{cases}
\end{align*}
and
\[
S^{(d_{1},\dots,d_{r-1})}=S^{(d_{1},\dots,d_{r-1})}(l_{1},\dots,l_{r}):=\bigcap_{j=1}^{r-1}S_{j}^{(d_{j})}.
\]
For positive integers $n_{1},\dots,n_{r}$ and $d_{1},\dots,d_{r-1}\in\{0,1\}$,
let
\begin{align*}
h^{(d_{1},\dots,d_{r-1})}(n_{1},\dots,n_{r}) & =h^{(d_{1},\dots,d_{r-1})}(n_{1},\dots,n_{r};l_{1},\dots,l_{r};\epsilon_{1},\ldots,\epsilon_{r})\\
 & :=(-1)^{l_{r}}l_{r}!\prod_{j=1}^{r-1}h_{j}^{(d_{j})}(n_{1},\dots,n_{r};l_{1},\dots,l_{r};\epsilon_{1},\ldots,\epsilon_{r}),
\end{align*}
where
\[
h_{j}^{(d_{j})}(n_{1},\dots,n_{r};l_{1},\dots,l_{r};\epsilon_{1},\ldots,\epsilon_{r}):=\begin{cases}
\displaystyle\frac{(-1)^{l_{j}}(-(n(j+1,r)-l(j,r)-(r-j)))!}{(-(n(j+1,r)-l(j+1,r)-(r-j)))!}& \text{if }d_{j}=0,\\
\displaystyle\frac{\epsilon(j+1,r)}{\epsilon(j,r)}\cdot\frac{(n(j+1,r)-l(j+1,r)-(r-j)-1)!}{(n(j+1,r)-l(j,r)-(r-j)-1)!}& \text{if }d_{j}=1.
\end{cases}
\]
In this paper, we present asymptotic behavior of the Hurwitz-Lerch multiple
zeta function at non-positive integer points. 
\begin{thm}\label{main}
Let $r\ge2$ and $a_{1},\dots,a_{r},z_{1},\ldots,z_{r}$ be complex parameters with $\Re(a_{1}),\Re(a_{1}+a_2),\dots,\Re(a_1+\cdots+a_{r})>0$ and $z_{1},\ldots,z_{r}\notin(1,\infty)$, 
and $\epsilon_{1},\ldots,\epsilon_{r}$ complex numbers. Suppose that $\left|\epsilon_{1}\right|,\ldots,\left|\epsilon_{r}\right|$ are sufficiently small with $\epsilon_{j}\neq0,\epsilon(j,r)\neq0$,
and $\left|\epsilon_{k}/\epsilon(j,r)\right|\ll1$ as $(\epsilon_{1},\ldots,\epsilon_{r})\rightarrow(0,\ldots,0)$
for $j=1,\ldots,r$ and $k=j,\ldots,r$. For non-negative integers
$l_{1},\dots,l_{r}$, 
we have 
\begin{align*}
 &\zeta(-l_{1}+\epsilon_{1},\dots,-l_{r}+\epsilon_{r};a_{1,}\ldots,a_{r};z_{1},\ldots,z_{r})\\
 &=(-1)^{l(1,r)+r}\sum_{d_{1},\dots,d_{r-1}\in\{0,1\}}\sum_{(n_{1},\dots,n_{r})\in S^{(d_{1},\dots,d_{r-1})}}\frac{B_{n_{1}}(a_1;z_1)\cdots B_{n_{r}}(a_{r};z_{r})}{n_{1}!\cdots n_{r}!}h^{(d_{1},\dots,d_{r-1})}(n_{1},\dots,n_{r})\\
 & \quad+\sum_{j=1}^{r}O(|\epsilon_{j}|).
\end{align*}
\end{thm}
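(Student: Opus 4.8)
The plan is to start from a Mellin-type integral representation. Writing $u_i:=t_i+\cdots+t_r$ and summing the geometric series $\sum_{m_i\ge0}(z_ie^{-u_i})^{m_i}$, I would first establish that for $\Re(s_1),\ldots,\Re(s_r)$ large,
\[
\zeta(s_1,\ldots,s_r;a_1,\ldots,a_r;z_1,\ldots,z_r)=\frac{1}{\prod_{j=1}^r\Gamma(s_j)}\int_{[0,\infty)^r}\prod_{j=1}^r t_j^{s_j-1}\prod_{i=1}^r f_i(u_i)\,dt_1\cdots dt_r,\qquad f_i(u):=\frac{e^{-a_iu}}{1-z_ie^{-u}}.
\]
The interchange of sum and integral is justified because $\prod_i f_i(u_i)=\exp\!\big(-\sum_k(a_1+\cdots+a_k)t_k\big)$ times bounded factors, which decays in every $t_k$ precisely under the hypotheses $\Re(a_1+\cdots+a_k)>0$. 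The generating function in the statement, after the substitution $x=-u$, gives the key expansion $f_i(u)=\sum_{n_i\ge0}(-1)^{n_i}B_{n_i}(a_i;z_i)u^{n_i-1}/n_i!$, valid for $|u|$ below the first pole of $f_i$.

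Next I would localize at the origin. Fixing a polydisc $\{u_i<\rho\}$ on which all the expansions converge, I split the integral into this region and its complement. The integral over the complement defines an entire function of $(s_1,\ldots,s_r)$, so after multiplication by $\prod_j\Gamma(s_j)^{-1}\approx\prod_j(-1)^{l_j}l_j!\,\epsilon_j$ near $s_j=-l_j$ it contributes only $O(\prod_j|\epsilon_j|)$ and is absorbed into the error. On the inner region I substitute the expansions of the $f_i$ and integrate term by term, reducing everything to iterated integrals of the monomials $\prod_j t_j^{s_j-1}\prod_i(t_i+\cdots+t_r)^{n_i-1}$.

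The core is the meromorphic continuation of these monomial integrals together with the extraction of their residues. I would treat the coupling $(t_i+\cdots+t_r)^{n_i-1}=(t_i+u_{i+1})^{n_i-1}$ by an iterated Mellin--Barnes representation, peeling off $t_1,t_2,\ldots$ in turn; each step produces $\Gamma$-factors whose poles, after multiplication by $\Gamma(s_j)^{-1}$, collapse to finite values at $s_j=-l_j$. Requiring all of these poles to line up simultaneously forces the total-degree condition $n(1,r)=l(1,r)+r$ that appears in every $S^{(d_1,\ldots,d_{r-1})}$. The binary index $d_j\in\{0,1\}$ records, at the $j$-th junction, on which side of the shifted contour the relevant $\Gamma$-poles lie: the two possibilities are separated by the thresholds $n(j+1,r)\le l(j+1,r)+(r-j)$ (case $d_j=0$) and $n(j+1,r)>l(j,r)+(r-j)$ (case $d_j=1$), while the intermediate gap of width $l_j$ gives contributions killed by a zero of a $1/\Gamma$-factor and is therefore $O(\epsilon)$. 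Evaluating the residues yields the coefficients $B_{n_i}(a_i;z_i)/n_i!$, the prefactor $(-1)^{l(1,r)+r}$ and the factor $(-1)^{l_r}l_r!$ from the zeros of the $\Gamma(s_j)^{-1}$, and the factorial ratios in $h_j^{(d_j)}$ as ratios of $\Gamma$-values at the pole locations.

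The main obstacle is the case $d_j=1$, where two $\Gamma$-poles sit at distance $\sim\epsilon_j=\epsilon(j,r)-\epsilon(j+1,r)$ and coalesce as $(\epsilon_1,\ldots,\epsilon_r)\to0$. Separating them needs exactly the hypotheses $\epsilon_j\neq0$ and $\epsilon(j,r)\neq0$, and extracting the finite part of the near-cancelling pair is what produces the weight $\epsilon(j+1,r)/\epsilon(j,r)$ in $h_j^{(1)}$; the condition $|\epsilon_k/\epsilon(j,r)|\ll1$ guarantees that these ratios stay bounded and that every remaining term is genuinely $O(|\epsilon_j|)$. Carrying out this coalescence analysis uniformly across all $2^{r-1}$ sign patterns, and bounding the resulting remainder uniformly in the $t$-integration, is the delicate bookkeeping at the heart of the proof; the remaining evaluation of the $\Gamma$-residues into the explicit factorials of $h^{(d_1,\ldots,d_{r-1})}$ is lengthy but routine.
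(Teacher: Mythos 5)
Your starting point --- the iterated Mellin representation with $u_i=t_i+\cdots+t_r$ and the expansion of $e^{-a_iu}/(1-z_ie^{-u})$ into Apostol--Bernoulli polynomials --- is the same as the paper's, but the argument breaks at the localization step. The complement of $\{u_1<\rho\}$ inside $[0,\infty)^r$ still contains points where individual coordinates $t_j$ are arbitrarily close to $0$ (only the sum $t_1+\cdots+t_r$ is bounded below there), so the factors $t_j^{s_j-1}$ make that integral divergent for $\Re(s_j)\le0$: it is holomorphic only in a right half-space, not entire, and it needs its own meromorphic continuation. This is exactly why the paper first changes variables by $x_1\cdots x_j=u_j+\cdots+u_r$, which turns the outer region into $x_1\in[c,\infty)$, $x_2,\dots,x_r\in[0,1]$, and then continues the resulting pieces $\widetilde{X}_2(i_2,\dots,i_r)$ by repeated integration by parts as in \eqref{integration by part1}--\eqref{integration by part2}. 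The continued function has genuine simple poles on hyperplanes through $(-l_1,\dots,-l_r)$, so after division by $\Gamma(s_1)\cdots\Gamma(s_r)$ it contributes $O(|\epsilon_{j_1-1}|)$ --- a single power of $\epsilon$, and only because of the hypothesis $|\epsilon_k/\epsilon(j,r)|\ll1$ --- not the $O\bigl(\prod_j|\epsilon_j|\bigr)$ you claim. Your conclusion that the outer piece is absorbed into the error is correct, but the reason you give is false, and supplying the correct reason is a substantial part of the proof.

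Two further points. On the inner region you integrate the full infinite Bernoulli expansion term by term and then continue each monomial integral; the sum of the continuations need not equal the continuation of the sum at $s=-l$ without uniform control, and the paper avoids this by truncating at $k\le N=l(1,r)+r$ (giving $Y_1$, which after the change of variables is evaluated exactly as a product of Beta functions in \eqref{Y1}, with no Mellin--Barnes needed) and by treating the tail $Y_2$ as a single integral, again continued by integration by parts using the growth estimates of Lemmas \ref{estimate2} and \ref{estimate3}. Your Mellin--Barnes device for the couplings $(t_i+\cdots+t_r)^{n_i-1}$ is a genuinely different and in principle viable route to the separable part, and your heuristic for how the constraint $n(1,r)=l(1,r)+r$ and the two regimes $d_j\in\{0,1\}$ arise does mirror the paper's Pochhammer analysis of the ratios $(\epsilon(j+1,r))_{U}/(\epsilon(j,r))_{D}$; but as written it is an outline rather than a proof, and the coalescing-pole analysis you yourself flag as delicate is precisely where the weights $\epsilon(j+1,r)/\epsilon(j,r)$, the factor $(-1)^{l_r}l_r!$, and the error terms must actually be extracted. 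As it stands the proposal has a genuine gap at the outer integral and leaves the two hardest steps unproved.
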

\begin{rem}
When $a_1=\cdots=a_r=1$ and $z_1=\cdots=z_r=1$, we can obtain \cite[Theorem 2]{Onozuka13}.
\end{rem}
Some examples are given as follows. 
\begin{ex}
When $r=2$ and $(l_1,l_2)=(0,0)$, by Theorem \ref{main}, we have
\begin{align*}
\zeta(\epsilon_1,\epsilon_2;a_1,a_2;z_1,z_2) 
&=B_{1}(a_1;z_1) B_{1}(a_{2};z_{2}) +\frac{1}{2} B_{2}(a_1;z_1) B_{0}(a_{2};z_{2})\\
&\quad +\frac{1}{2} B_{0}(a_1;z_1) B_{2}(a_{2};z_{2}) \frac{\epsilon_{2}}{\epsilon_{1}+\epsilon_{2}}+\sum_{j=1}^{2}O(|\epsilon_{j}|).
\end{align*}
When $z_1,z_2\neq1$, since we can easily obtain $B_{0}(a;z) =0$ and $B_{1}(a;z) =(z-1)^{-1}$, we have
\begin{align*}
\zeta(\epsilon_1,\epsilon_2;a_1,a_2;z_1,z_2) 
&=(z_1-1)^{-1}(z_2-1)^{-1} +\sum_{j=1}^{2}O(|\epsilon_{j}|).
\end{align*}
When $z_1=1,z_2\neq1$,  since $B_{n}(a;1) $ is the Bernoulli polynomial and $B_{2}(a;z) =2a/(z-1)-2 z/(z-1)^2$, we have
\begin{align*}
\zeta(\epsilon_1,\epsilon_2;a_1,a_2;1,z_2) 
&=\left(a_{1}-\frac{1}{2}\right) \frac{1}{z_{2}-1} +\left(\frac{1}{z_2-1}a_2-\frac{ z_2}{(z_2-1)^2}\right)\frac{\epsilon_{2}}{\epsilon_{1}+\epsilon_{2}}+\sum_{j=1}^{2}O(|\epsilon_{j}|).
\end{align*}
When $z_1\neq1,z_2=1$ and $z_1=z_2=1$ ,  we have
\begin{align*}
&\zeta(\epsilon_1,\epsilon_2;a_1,a_2;z_1,1) 
=\frac{1}{z_{1}-1} \left(a_1+a_{2}-\frac{3}{2}\right)  -\frac{ 1}{(z_1-1)^2}+\sum_{j=1}^{2}O(|\epsilon_{j}|),\\
&\zeta(\epsilon_1,\epsilon_2;a_1,a_2;1,1) 
=\left(a_{1}-\frac{1}{2}\right) \left(a_{2}-\frac{1}{2}\right)  +\frac{1}{2}\left(a_1^2-a_1+\frac{1}{6}\right)+\frac{1}{2}\left(a_2^2-a_2+\frac{1}{6}\right)\frac{\epsilon_{2}}{\epsilon_{1}+\epsilon_{2}}+\sum_{j=1}^{2}O(|\epsilon_{j}|),
\end{align*}
respectively. In the last section, we give other examples.
\end{ex}

\begin{rem}
 Some of the special cases of the limit values were also studied by Akiyama and Tanigawa \cite{AT01}, Komori \cite{Kom09},  and Sasaki \cite{Sas09}.
\end{rem}

\section{preliminaries}
We give some results on the Apostol-Bernoulli polynomial.
Let $\lambda(z)$ be one of the poles of the generating function $xe^{ax}/(ze^{x}-1)$
closest to $x=0$ except for $x=0$, and $S(n,k)$ the Stirling number of the second kind.
(For $z=0$, we define $\lambda(0)=\infty$.)
\begin{lem}\label{apos}
For  $|x|<|\lambda(z)|$, we have
$$\frac{xe^{-ax}}{1-ze^{-x}}=\sum_{n\ge0}(-1)^{n}B_{n}(a;z)\frac{x^{n}}{n!}.$$
\end{lem}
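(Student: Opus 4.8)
The plan is to derive the stated identity directly from the defining generating function of the Apostol-Bernoulli polynomials by means of the substitution $x\mapsto-x$. Starting from $\frac{xe^{ax}}{ze^{x}-1}=\sum_{n\ge0}B_{n}(a;z)x^{n}/n!$, I would replace $x$ by $-x$ throughout. On the right-hand side this produces $\sum_{n\ge0}B_{n}(a;z)(-x)^{n}/n!=\sum_{n\ge0}(-1)^{n}B_{n}(a;z)x^{n}/n!$, which is exactly the series claimed in the lemma.

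On the left-hand side the substitution gives $\frac{(-x)e^{-ax}}{ze^{-x}-1}$, and the key step is the elementary sign manipulation $\frac{-xe^{-ax}}{ze^{-x}-1}=\frac{-xe^{-ax}}{-(1-ze^{-x})}=\frac{xe^{-ax}}{1-ze^{-x}}$, which is precisely the left-hand side of the assertion. Matching the two sides then yields the identity as an equality of formal power series, and hence of analytic functions wherever both sides converge.

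The only point requiring a word of justification is the radius of convergence, namely that the new expansion remains valid on the same disk $|x|<|\lambda(z)|$. Here I would observe that $x\mapsto-x$ carries the set of poles of $xe^{ax}/(ze^{x}-1)$, i.e.\ the solutions of $ze^{x}=1$, bijectively onto the set of poles of $xe^{-ax}/(1-ze^{-x})$, i.e.\ the solutions of $ze^{-x}=1$; in fact the two pole sets are exact negatives of one another. Since negation preserves modulus, the pole of the transformed function nearest to the origin again has modulus $|\lambda(z)|$, so the transformed series converges on the same disk (for $z=0$ both sides reduce to the entire function $xe^{-ax}$, matching $\lambda(0)=\infty$). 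I do not anticipate any genuine obstacle: the Stirling-number data $S(n,k)$ recalled before the statement plays no role in this identity and presumably enters only in later lemmas, so the argument reduces to the substitution above together with this short convergence remark.
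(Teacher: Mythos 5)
Your proposal is correct and follows essentially the same route as the paper: substitute $-x$ into the defining generating function and flip the sign of the denominator. The extra remark on the pole set being preserved under negation is a harmless addition that the paper leaves implicit.
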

\begin{proof}
By the definition of the Apostol-Bernoulli polynomial, we have
\begin{align*}
\frac{xe^{-ax}}{1-ze^{-x}}&=\frac{(-x)e^{a(-x)}}{ze^{-x}-1}
=\sum_{n\ge0}(-1)^{n}B_{n}(a;z)\frac{x^{n}}{n!}.\qedhere
\end{align*}
\end{proof}
\begin{thm}[Apostol \cite{Apo51}]\label{a51}
For a complex number $z\neq1$, the Apostol-Bernoulli polynomial  $B_n(a;z)$ can be written by 
$$
B_n(a;z)=\sum_{k=0}^n\binom{n}{k}\beta_k(z)a^{n-k},
$$
where $\beta_k(z)=B_k(0;z)$ is defined by
$$
\beta_k(z)=\frac{k}{(z-1)^k}\sum_{l=0}^{k-1}l!(-z)^{l}(z-1)^{k-1-l}S(k-1,l).
$$
\end{thm}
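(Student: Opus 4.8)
The plan is to establish the two displayed formulas separately, both starting from the defining generating function $xe^{ax}/(ze^{x}-1)=\sum_{n\ge0}B_{n}(a;z)x^{n}/n!$. For the binomial (addition) formula, I would factor the generating function as $e^{ax}\cdot x/(ze^{x}-1)$. Setting $a=0$ identifies $x/(ze^{x}-1)=\sum_{k\ge0}\beta_{k}(z)x^{k}/k!$ with $\beta_{k}(z)=B_{k}(0;z)$. Expanding $e^{ax}=\sum_{m\ge0}a^{m}x^{m}/m!$ and taking the Cauchy product of these two power series, then comparing the coefficient of $x^{n}/n!$ on both sides, yields $B_{n}(a;z)=\sum_{k=0}^{n}\binom{n}{k}\beta_{k}(z)a^{n-k}$ at once.

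For the explicit formula for $\beta_{k}(z)$, the key is to expand $1/(ze^{x}-1)$ so as to bring in the Stirling numbers. I would write
$$\frac{1}{ze^{x}-1}=\frac{1}{(z-1)+z(e^{x}-1)}=\frac{1}{z-1}\cdot\frac{1}{1+\frac{z}{z-1}(e^{x}-1)},$$
which is legitimate because $z\neq1$. Since $e^{x}-1\to0$ as $x\to0$, for $|x|$ small the geometric series
$$\frac{1}{ze^{x}-1}=\frac{1}{z-1}\sum_{j\ge0}\left(-\frac{z}{z-1}\right)^{j}(e^{x}-1)^{j}$$
converges. I would then invoke the standard exponential generating function of the Stirling numbers of the second kind, $(e^{x}-1)^{j}=j!\sum_{n\ge j}S(n,j)x^{n}/n!$, to rewrite the right-hand side as a double series in $x$.

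Finally, multiplying through by $x$ gives
$$\frac{x}{ze^{x}-1}=\frac{x}{z-1}\sum_{j\ge0}\left(-\frac{z}{z-1}\right)^{j}j!\sum_{n\ge j}S(n,j)\frac{x^{n}}{n!}.$$
Because of the leading factor $x$, the coefficient of $x^{k}$ picks out the term $n=k-1$, and since $S(k-1,j)=0$ for $j>k-1$ the inner sum truncates at $j=k-1$. Matching this against the coefficient $\beta_{k}(z)/k!$ of $x^{k}$ and simplifying the factorials via $k!/(k-1)!=k$ produces $\beta_{k}(z)=k\sum_{l=0}^{k-1}l!(-z)^{l}S(k-1,l)/(z-1)^{l+1}$, which is exactly the claimed expression after factoring $(z-1)^{-k}$ out of the sum. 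The only point demanding care is the index shift forced by the leading $x$ together with the truncation of the Stirling sum; the convergence of the geometric series is automatic in a punctured neighborhood of $x=0$ (indeed for $|x|<|\lambda(z)|$), so there is no genuine analytic obstacle.
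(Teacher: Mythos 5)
The paper states this result purely as a citation to Apostol \cite{Apo51} and supplies no proof of its own, so there is no in-paper argument to compare against; judged on its own merits, your proof is correct and is essentially the classical derivation. The factorization $e^{ax}\cdot x/(ze^{x}-1)$ together with the Cauchy product cleanly yields the binomial formula, and the expansion of $1/(ze^{x}-1)$ as a geometric series in $\frac{z}{z-1}(e^{x}-1)$ combined with $(e^{x}-1)^{j}=j!\sum_{n\ge j}S(n,j)x^{n}/n!$ gives the stated expression for $\beta_{k}(z)$; the index shift coming from the leading factor $x$ and the truncation of the Stirling sum at $j=k-1$ are handled correctly, and the rearrangement of the double series is legitimate because $(e^{x}-1)^{j}$ vanishes to order $j$ at $x=0$, so each Taylor coefficient receives only finitely many contributions. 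The only blemish is the parenthetical claim that the geometric series converges for all $|x|<|\lambda(z)|$: its actual region of convergence is $\{x:|z|\,|e^{x}-1|<|z-1|\}$, which is a neighborhood of $0$ but need not contain that whole disk. This is immaterial, since identifying Taylor coefficients only requires the identity to hold near $x=0$.
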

\begin{lem}\label{estimate2}
Let $z\neq1$. 
For a positive integer $n$, we have
\begin{align*}
|B_{n}(a;z)| \ll\frac{n!}{(\log2)^n}e^{(\log2)|a|}\max\left\{|z|^{n-1}|z-1|^{-n},|z-1|^{-1}\right\},
\end{align*}
where the implicit constant does not depend on $a$, $z$, and $n$.
\end{lem}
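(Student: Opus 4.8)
The plan is to start from Apostol's explicit formula (Theorem~\ref{a51}), which writes $B_n(a;z)=\sum_{k=0}^n\binom nk\beta_k(z)a^{n-k}$, and to reduce the whole estimate to one clean bound on the coefficients $\beta_k(z)$. First I observe that the factor $k$ appearing in the formula for $\beta_k(z)$ forces $\beta_0(z)=0$, so the sum effectively runs over $1\le k\le n$; this is exactly what produces a $|z-1|^{-1}$ (rather than $|z-1|^{0}$) in the final bound. Taking absolute values inside the defining sum for $\beta_k(z)$ and factoring out one power of $|z-1|^{-1}$, I would write
\[
|\beta_k(z)|\le\frac{k}{|z-1|}\sum_{l=0}^{k-1}l!\,S(k-1,l)\,w^l=\frac{k}{|z-1|}\,F_{k-1}(w),\qquad w:=\frac{|z|}{|z-1|},
\]
where $F_m(w):=\sum_{l=0}^m l!\,S(m,l)w^l$ is the Fubini (ordered Bell) polynomial.

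The key estimate is then a uniform bound on $F_m(w)$. Since every coefficient $l!\,S(m,l)$ is nonnegative and $w^l\le\max\{1,w\}^m$ for $0\le l\le m$, monotonicity gives $F_m(w)\le\max\{1,w\}^m F_m(1)$, and $F_m(1)$ is precisely the ordered Bell number $a_m$. The constant $\log 2$ in the statement enters here and nowhere else: the generating function $\sum_m a_m t^m/m!=(2-e^t)^{-1}$ has its dominant singularity at $t=\log 2$ (the $w=1$, i.e.\ $|z|=|z-1|$, case of $\bigl(1-w(e^t-1)\bigr)^{-1}$), so that $a_m\ll m!/(\log 2)^m$. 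I therefore record $F_m(w)\ll\frac{m!}{(\log 2)^m}\max\{1,w\}^m$ with an absolute implied constant.

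It remains to reassemble. Substituting the bound for $\beta_k(z)$ and using $\binom nk k!=n!/(n-k)!$, the sum $\sum_{k=1}^n\binom nk|\beta_k(z)|\,|a|^{n-k}$ becomes $\frac{n!}{|z-1|}\sum_{i=0}^{n-1}\frac{M^{\,n-1-i}|a|^i}{i!\,(\log 2)^{\,n-1-i}}$ after setting $i=n-k$ and $M:=\max\{1,w\}$. Pulling out $M^{n-1}/(\log 2)^{n-1}$ leaves the partial exponential series $\sum_i\frac1{i!}\bigl(\frac{|a|\log 2}{M}\bigr)^i\le e^{(\log 2)|a|}$, where $M\ge1$ lets me absorb $M$ in the exponent; this is the origin of the factor $e^{(\log 2)|a|}=2^{|a|}$. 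Finally $M^{n-1}/|z-1|=\max\{1,w\}^{n-1}/|z-1|=\max\{|z|^{n-1}|z-1|^{-n},\,|z-1|^{-1}\}$ by the definition of $w$, which reproduces exactly the maximum in the statement and yields the claimed bound, with an implied constant independent of $a$, $z$, and $n$.

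I expect the main obstacle to be the clean ordered-Bell estimate $a_m\ll m!/(\log 2)^m$ with the correct constant and no spurious polynomial factor in $m$: a naive Cauchy estimate on $(2-e^t)^{-1}$ over a fixed circle loses the constant, while pushing the radius to $\log 2$ introduces an extra factor of $m$, so one must either invoke the known inequality $a_m\le m!/(\log 2)^{m+1}$ or argue through the representation $a_m=\tfrac12\sum_{k\ge0}k^m2^{-k}$ compared against $\tfrac12\int_0^\infty x^m2^{-x}\,dx=\tfrac12\,m!/(\log 2)^{m+1}$, controlling the unimodal series by the integral plus its single peak. Everything else is bookkeeping.
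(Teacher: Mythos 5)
Your proposal is correct and follows essentially the same route as the paper: both start from Apostol's formula, bound $|\beta_k(z)|$ by factoring $\max\{|z|,|z-1|\}^{k-1}$ (your $M^{k-1}|z-1|^{k-1}$) out of the Stirling-number sum, invoke the ordered Bell number estimate $\sum_{l}l!\,S(k-1,l)\ll(k-1)!/(\log 2)^{k}$, and then sum the resulting partial exponential series to get $e^{(\log 2)|a|}$. The only difference is that you supply a justification for the ordered Bell asymptotic, which the paper simply cites as known.
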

\begin{proof}
Note that the estimate of the ordered Bell number is known as $\sum_{l=0}^{k-1}l!S(k-1,l)\ll(k-1)!/(\log2)^k$.
By the previous theorem, we have
\begin{align*}
\beta_k(z)&\le\frac{k}{|z-1|^k}\max\left\{|z|^{k-1},|z-1|^{k-1}\right\}\sum_{l=0}^{k-1}l!S(k-1,l)\\
&\ll \frac{k!}{(\log 2)^k|z-1|^k}\max\left\{|z|^{k-1},|z-1|^{k-1}\right\}.
\end{align*}
 Hence we have
\begin{align*}
|B_n(a;z)|&\ll\sum_{k=1}^n\binom{n}{k}\frac{k!}{(\log 2)^k|z-1|^k}\max\left\{|z|^{k-1},|z-1|^{k-1}\right\}|a|^{n-k}\\
&\ll n!\sum_{k=1}^n\frac{1}{(n-k)!}\frac{|a|^{n-k}}{(\log 2)^k|z-1|^k}\max\left\{|z|^{k-1},|z-1|^{k-1}\right\}\\
&\ll \frac{n!}{(\log2)^n}e^{(\log2)|a|}\max\left\{|z|^{n-1}|z-1|^{-n},|z-1|^{-1}\right\}.\qedhere
\end{align*}
\end{proof}
\begin{lem}\label{estimate3} 
For a positive integer $n$, we have
\begin{align*}
|B_{n}(a;1)| \ll\frac{n!}{(2\pi)^{n}}e^{2\pi|a|},
\end{align*}
where the implicit constant does not depend on $a$ and $n$.
\end{lem}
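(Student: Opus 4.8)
The plan is to mirror the proof of Lemma~\ref{estimate2}, replacing the input from Theorem~\ref{a51} (which is valid only for $z\neq1$) by the classical closed form for the Bernoulli polynomial together with the well-known bound on the Bernoulli numbers. Since $B_n(a;1)=B_n(a)$ is the ordinary Bernoulli polynomial, I would start from the expansion
\[
B_n(a)=\sum_{k=0}^{n}\binom{n}{k}B_k\,a^{n-k},
\]
where $B_k=B_k(0)$ is the $k$-th Bernoulli number, so that the problem reduces to an estimate for $|B_k|$.

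Next I would recall the classical formula $|B_{2m}|=\dfrac{2(2m)!}{(2\pi)^{2m}}\zeta(2m)$ for $m\ge1$ (with $B_k=0$ for odd $k\ge3$). Because $1\le\zeta(2m)\le\zeta(2)$ for all $m\ge1$, and because the finitely many remaining cases $k=0,1$ are handled directly, this gives a uniform bound of the shape
\[
|B_k|\ll\frac{k!}{(2\pi)^k}\qquad(k\ge0),
\]
with an absolute implicit constant. This is the $z=1$ analogue of the ordered-Bell estimate used for $\beta_k(z)$ in Lemma~\ref{estimate2}.

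Substituting this bound into the expansion and using $\binom{n}{k}k!=n!/(n-k)!$, I would obtain
\[
|B_n(a)|\ll n!\sum_{k=0}^{n}\frac{1}{(n-k)!}\frac{|a|^{n-k}}{(2\pi)^k}
=\frac{n!}{(2\pi)^n}\sum_{j=0}^{n}\frac{(2\pi|a|)^j}{j!}
\le\frac{n!}{(2\pi)^n}e^{2\pi|a|},
\]
where in the middle step I reindexed by $j=n-k$ and in the last step bounded the partial sum by the full exponential series. This is precisely the claimed estimate, and the implicit constant (coming only from the bound on $|B_k|$) depends neither on $a$ nor on $n$.

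There is essentially no serious obstacle here; the only point requiring care is the uniform Bernoulli-number bound, in particular checking that the constant can be chosen absolute across the small values $k=0,1$ and the vanishing odd terms. An alternative would be to estimate $B_n(a)/n!$ directly as a Cauchy coefficient of the generating function $xe^{ax}/(e^x-1)$, whose nearest poles to the origin sit at $\pm2\pi i$; but extracting the clean factor $e^{2\pi|a|}$ that way forces the contour radius toward $2\pi$ and hence a limiting argument around the pole, so the series manipulation above is the more economical route.
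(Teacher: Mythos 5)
Your argument is correct and is essentially identical to the paper's proof: both expand $B_n(a;1)=B_n(a)$ via the binomial formula with Bernoulli-number coefficients, invoke the uniform bound $|B_k|\ll k!/(2\pi)^k$, and resum the resulting series into $e^{2\pi|a|}$ (your sum is just the paper's with the index reversed). The only difference is that you spell out the justification of $|B_k|\ll k!/(2\pi)^k$ via $|B_{2m}|=2(2m)!\,\zeta(2m)/(2\pi)^{2m}$, which the paper simply cites as known.
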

\begin{proof}
Since $|B_n|\ll n!/(2\pi)^n$, we have
\begin{align*}
|B_n(a;1)|&\le\sum_{k=0}^n\binom{n}{k}|B_{n-k}||a|^k\\
&\ll\sum_{k=0}^n\frac{n!}{k!}\frac{|a|^k}{(2\pi)^{n-k}}\\
&\ll\frac{n!}{(2\pi)^{n}}e^{2\pi|a|}.\qedhere
\end{align*}
\end{proof}

\section{Meromorphic Continuation}

By the definition of the gamma function,  for $\Re(s_1),\ldots,\Re(s_r)>1$, $\Re(a_{1}),\Re(a_{1}+a_2),\dots,\Re(a_1+\cdots+a_{r})>0$, and $|z_1|,\ldots,|z_r|\le1$ with $z_1,\ldots,z_r\neq0$, we have
\begin{align*}
&\Gamma(s_1)\cdots\Gamma(s_r)\frac{z_{1}^{m_{1}}\cdots z_{r}^{m_{r}}}{(m_{1}+a_{1})^{s_{1}}\cdots(m_{1}+\cdots+m_{r}+a_{1}+\cdots+a_{r})^{s_{r}}}\\
&=\int_0^\infty \cdots \int_0^\infty e^{-(m_1+a_1)u_1-\cdots-(m_1+\cdots+m_r+a_1+\cdots+a_r)u_r}z_{1}^{m_{1}}\cdots z_{r}^{m_{r}}u_1^{s_1-1}\cdots u_r^{s_r-1}du_1\cdots du_r.
\end{align*}
By taking the sum of the both sides, we have
\begin{align*}
 & \Gamma(s_{1})\cdots\Gamma(s_{r})\zeta(s_{1},\ldots,s_{r};a_{1,}\ldots,a_{r};z_{1},\ldots,z_{r})\\
 & =\int_{0}^{\infty}\cdots\int_{0}^{\infty}\frac{e^{-a_{1}(u_{1}+\cdots+u_{r})}}{1-z_{1}e^{-(u_{1}+\cdots+u_{r})}}\cdot\frac{e^{-a_{2}(u_{2}+\cdots+u_{r})}}{1-z_{2}e^{-(u_{2}+\cdots+u_{r})}}\cdot\cdots\cdot\frac{e^{-a_{r}u_{r}}}{1-z_{r}e^{-u_{r}}}u_{1}^{s_{1}-1}\cdots u_{r}^{s_{r}-1}du_{1}\cdots du_{r}.
\end{align*}
Here, we use a change of variables
$$x_1\cdots x_j=u_j+\cdots+u_r\iff u_j=x_1\cdots x_j(1-x_{j+1})$$
for $j=1,\ldots,r$, where $x_{r+1}=0$. Since the Jacobian is $x_1^{r-1}\cdots x_{r-1}$, we have
\begin{align*} 
 & \Gamma(s_{1})\cdots\Gamma(s_{r})\zeta(s_{1},\ldots,s_{r};a_{1,}\ldots,a_{r};z_{1},\ldots,z_{r})\\
 & =\int_{0}^{1}\cdots\int_{0}^{1}\int_{0}^{\infty}\frac{e^{-a_{1}x_{1}}}{1-z_{1}e^{-x_{1}}}\cdot\frac{e^{-a_{2}x_{1}x_{2}}}{1-z_{2}e^{-x_{1}x_{2}}}\cdot\cdots\cdot\frac{e^{-a_{r}x_{1}\cdots x_{r}}}{1-z_{r}e^{-x_{1}\cdots x_{r}}}\\
 & \quad\quad(x_{1}(1-x_{2}))^{s_{1}-1}(x_{1}x_{2}(1-x_{3}))^{s_{2}-1}\cdots(x_{1}\cdots x_{r-1}(1-x_{r}))^{s_{r-1}-1}(x_{1}\cdots x_{r})^{s_{r}-1}\\
 & \quad\quad x_{1}^{r-1}\cdots x_{r-1}dx_{1}\cdots dx_{r}\\
 & =\int_{0}^{1}\cdots\int_{0}^{1}\int_{0}^{\infty}x_{1}^{s_{1}+\cdots+s_{r}-r-1}x_{2}^{s_{2}+\cdots+s_{r}-(r-1)-1}\cdots x_{r}^{s_{r}-2}\\
 & \quad\quad(1-x_{2}){}^{s_{1}-1}\cdots(1-x_{r}){}^{s_{r-1}-1}\\
 & \quad\quad\frac{x_1e^{-a_{1}x_{1}}}{1-z_{1}e^{-x_{1}}}\cdot\frac{x_1x_2e^{-a_{2}x_{1}x_{2}}}{1-z_{2}e^{-x_{1}x_{2}}}\cdot\cdots\cdot\frac{x_1x_2\cdots x_re^{-a_{r}x_{1}\cdots x_{r}}}{1-z_{r}e^{-x_{1}\cdots x_{r}}}dx_{1}\cdots dx_{r}.
\end{align*}

Fix a small positive number $c$ with $0<c<\min_{1\le j\le r}\{|\lambda(z_j)|\}$. Put 
\begin{align*}
X_{1} & :=\int_{0}^{1}\cdots\int_{0}^{1}\int_{0}^{c}x_{1}^{s_{1}+\cdots+s_{r}-r-1}x_{2}^{s_{2}+\cdots+s_{r}-(r-1)-1}\cdots x_{r}^{s_{r}-2}\\
 & \quad\quad(1-x_{2}){}^{s_{1}-1}\cdots(1-x_{r}){}^{s_{r-1}-1}\\
 & \quad\quad\frac{x_1e^{-a_{1}x_{1}}}{1-z_{1}e^{-x_{1}}}\cdot\frac{x_1x_2e^{-a_{2}x_{1}x_{2}}}{1-z_{2}e^{-x_{1}x_{2}}}\cdot\cdots\cdot\frac{x_1x_2\cdots x_re^{-a_{r}x_{1}\cdots x_{r}}}{1-z_{r}e^{-x_{1}\cdots x_{r}}}dx_{1}\cdots dx_{r},\\
X_{2} & :=\int_{0}^{1}\cdots\int_{0}^{1}\int_{c}^{\infty}x_{1}^{s_{1}+\cdots+s_{r}-r-1}x_{2}^{s_{2}+\cdots+s_{r}-(r-1)-1}\cdots x_{r}^{s_{r}-2}\\
 & \quad\quad(1-x_{2}){}^{s_{1}-1}\cdots(1-x_{r}){}^{s_{r-1}-1}\\
 & \quad\quad\frac{x_1e^{-a_{1}x_{1}}}{1-z_{1}e^{-x_{1}}}\cdot\frac{x_1x_2e^{-a_{2}x_{1}x_{2}}}{1-z_{2}e^{-x_{1}x_{2}}}\cdot\cdots\cdot\frac{x_1x_2\cdots x_re^{-a_{r}x_{1}\cdots x_{r}}}{1-z_{r}e^{-x_{1}\cdots x_{r}}}dx_{1}\cdots dx_{r}.
\end{align*}
Then we have
\begin{align*}
 & \Gamma(s_{1})\cdots\Gamma(s_{r})\zeta(s_{1},\ldots,s_{r};a_{1,}\ldots,a_{r};z_{1},\ldots,z_{r})\\
 & =X_{1}+X_{2}.
\end{align*}
By Lemma \ref{apos}, we have 
\begin{align*}
X_{1}& =\int_{0}^{1}\cdots\int_{0}^{1}\int_{0}^{c}x_{1}^{s_{1}+\cdots+s_{r}-r-1}x_{2}^{s_{2}+\cdots+s_{r}-(r-1)-1}\cdots x_{r}^{s_{r}-2}\protect\\
 & \quad\quad(1-x_{2}){}^{s_{1}-1}\cdots(1-x_{r}){}^{s_{r-1}-1}\protect\\
 & \quad\quad\biggl(\sum_{n_{1}\ge0}(-1)^{n_1}B_{n_{1}}(a_1;z_1)\frac{x_{1}^{n_{1}}}{n_{1}!}\biggr)\cdot\biggl(\sum_{n_{2}\ge0}(-1)^{n_2}B_{n_{2}}(a_{2};z_{2})\frac{(x_{1}x_{2})^{n_{2}}}{n_{2}!}\biggr)\cdots\protect\\
 & \quad\quad\biggl(\sum_{n_{r}\ge0}(-1)^{n_r}B_{n_{r}}(a_{r};z_{r})\frac{(x_{1}\cdots x_{r})^{n_{r}}}{n_{r}!}\biggr)dx_{1}\cdots dx_{r}.
\end{align*}
Thus we find
\begin{align*}
X_{1} & =\int_{0}^{1}\cdots\int_{0}^{1}\int_{0}^{c}x_{1}^{s_{1}+\cdots+s_{r}-r-1}x_{2}^{s_{2}+\cdots+s_{r}-(r-1)-1}\cdots x_{r}^{s_{r}-2}\\
 & \quad\quad(1-x_{2}){}^{s_{1}-1}\cdots(1-x_{r}){}^{s_{r-1}-1}\\
 & \quad\quad\sum_{k\ge0}\sum_{n_{1}+\cdots+n_{r}=k,n_{1},\dots,n_{r}\ge0}(-1)^k\frac{B_{n_{1}}(a_1;z_1)\cdots B_{n_{r}}(a_{r};z_{r})}{n_{1}!\cdots n_{r}!}x_{1}^{k}x_{2}^{n_{2}+\cdots+n_{r}}\cdots x_{r}^{n_{r}}dx_{1}\cdots dx_{r}.
\end{align*}
\\
For a non-negative integer $N$, put 
\begin{align*}
Y_{1} & :=\int_{0}^{1}\cdots\int_{0}^{1}\int_{0}^{c}x_{1}^{s_{1}+\cdots+s_{r}-r-1}x_{2}^{s_{2}+\cdots+s_{r}-(r-1)-1}\cdots x_{r}^{s_{r}-2}\times(1-x_{2}){}^{s_{1}-1}\cdots(1-x_{r}){}^{s_{r-1}-1}\\
 & \quad\quad\sum_{0\le k\le N}\sum_{n_{1}+\cdots+n_{r}=k,n_{1},\dots,n_{r}\ge0}(-1)^k\frac{B_{n_{1}}(a_1;z_1)\cdots B_{n_{r}}(a_{r};z_{r})}{n_{1}!\cdots n_{r}!}x_{1}^{k}x_{2}^{n_{2}+\cdots+n_{r}}\cdots x_{r}^{n_{r}}dx_{1}\cdots dx_{r},\\
Y_{2} & :=\int_{0}^{1}\cdots\int_{0}^{1}\int_{0}^{c}x_{1}^{s_{1}+\cdots+s_{r}-r-1}x_{2}^{s_{2}+\cdots+s_{r}-(r-1)-1}\cdots x_{r}^{s_{r}-2}\times(1-x_{2}){}^{s_{1}-1}\cdots(1-x_{r}){}^{s_{r-1}-1}\\
 & \quad\quad\sum_{k>N}\sum_{n_{1}+\cdots+n_{r}=k,n_{1},\dots,n_{r}\ge0}(-1)^k\frac{B_{n_{1}}(a_1;z_1)\cdots B_{n_{r}}(a_{r};z_{r})}{n_{1}!\cdots n_{r}!}x_{1}^{k}x_{2}^{n_{2}+\cdots+n_{r}}\cdots x_{r}^{n_{r}}dx_{1}\cdots dx_{r}.
\end{align*}
Note that $X_{1}=Y_{1}+Y_{2}$. By changing
the order of sums and integrals of $Y_1$, we have 
\begin{align}
Y_{1} & =\sum_{0\le k\le N}\sum_{n_{1}+\cdots+n_{r}=k,n_{1},\dots,n_{r}\ge0}(-1)^k\frac{B_{n_{1}}(a_1;z_1)\cdots B_{n_{r}}(a_{r};z_{r})}{n_{1}!\cdots n_{r}!}\int_{0}^{c}x_{1}^{k+s_{1}+\cdots+s_{r}-r-1}dx_{1}\nonumber \\
 & \quad\int_{0}^{1}x_{2}^{n_{2}+\cdots+n_{r}+s_{2}+\cdots+s_{r}-(r-1)-1}(1-x_{2}){}^{s_{1}-1}dx_{2}\nonumber \\
 & \quad\int_{0}^{1}x_{3}^{n_{3}+\cdots+n_{r}+s_{3}+\cdots+s_{r}-(r-2)-1}(1-x_{3}){}^{s_{2}-1}dx_{3}\cdots\nonumber \\
 & \quad\int_{0}^{1}x_{r}^{n_{r}+s_{r}-2}(1-x_{r}){}^{s_{r-1}-1}dx_{r}\nonumber \\
 & =\sum_{0\le k\le N}\sum_{n_{1}+\cdots+n_{r}=k,n_{1},\dots,n_{r}\ge0}(-1)^k\frac{B_{n_{1}}(a_1;z_1)\cdots B_{n_{r}}(a_{r};z_{r})}{n_{1}!\cdots n_{r}!}\label{Y1}
 \cdot\frac{c^{k+s_{1}+\cdots+s_{r}-r}}{k+s_{1}+\cdots+s_{r}-r} \\
 & \quad\cdot\frac{\Gamma(n_{2}+\cdots+n_{r}+s_{2}+\cdots+s_{r}-(r-1))\Gamma(s_{1})}{\Gamma(n_{2}+\cdots+n_{r}+s_{1}+s_{2}+\cdots+s_{r}-(r-1))}\nonumber \\
 & \quad\cdot\frac{\Gamma(n_{3}+\cdots+n_{r}+s_{3}+\cdots+s_{r}-(r-2))\Gamma(s_{2})}{\Gamma(n_{3}+\cdots+n_{r}+s_{2}+\cdots+s_{r}-(r-2))}\cdots\nonumber \\
 & \quad\cdot\frac{\Gamma(n_{r}+s_{r}-1)\Gamma(s_{r-1})}{\Gamma(n_{r}+s_{r-1}+s_{r}-1)}.\nonumber 
\end{align}
When $z_1,\ldots,z_r\neq1$, by Theorem \ref{a51}, $Y_{1}$, as a function of $(z_1,\ldots,z_r)$, can be continued to
$$
\{(z_1,\ldots,z_r)\in\mathbb{C}^r\mid z_1\neq1,\ldots ,z_r\neq1\}.
$$
Hence, $Y_{1}$ can be continued meromorphically to $\mathbb{C}^{2r}$ as a function of $(s_1,\ldots,s_r,z_1,\ldots,z_r)$. Generally, when $z_{p_1},\ldots,z_{p_{\xi}}\neq1$ and $z_{q_1}=\cdots=z_{q_{r-\xi}}=1$, $Y_{1}$ can be continued meromorphically to $\mathbb{C}^{r+\xi}$ as a function of $(s_1,\ldots,s_r,z_{p_1},\ldots,z_{p_{\xi}})$ since $B_n(a;1)$ is the Bernoulli polynomial.

Now, we consider $Y_{2}$. By changing the exponent of $x_1$, we have 
\begin{align*}
Y_{2} & =\int_{0}^{1}\cdots\int_{0}^{1}\int_{0}^{c}x_{1}^{s_{1}+\cdots+s_{r}-r+N}x_{2}^{s_{2}+\cdots+s_{r}-(r-1)-1}\cdots x_{r}^{s_{r}-2}\cdot(1-x_{2}){}^{s_{1}-1}\cdots(1-x_{r}){}^{s_{r-1}-1}\\
 & \quad\quad\sum_{k>N}\sum_{n_{1}+\cdots+n_{r}=k,n_{1},\dots,n_{r}\ge0}(-1)^k\frac{B_{n_{1}}(a_1;z_1)\cdots B_{n_{r}}(a_{r};z_{r})}{n_{1}!\cdots n_{r}!}x_{1}^{k-N-1}x_{2}^{n_{2}+\cdots+n_{r}}\cdots x_{r}^{n_{r}}dx_{1}\cdots dx_{r}.
\end{align*}
We consider the convergence of the sum of the last line. 
For $z_1,\ldots,z_r\neq1$, by Lemma
 \ref{estimate2}, we have\\
\begin{align}
 & \sum_{k>N}\sum_{n_{1}+\cdots+n_{r}=k,n_{1},\dots,n_{r}\ge0}\biggl|\frac{B_{n_{1}}(a_1;z_1)\cdots B_{n_{r}}(a_{r};z_{r})}{n_{1}!\cdots n_{r}!}\biggr|\cdot|x_{1}^{k-N-1}x_{2}^{n_{2}+\cdots+n_{r}}\cdots x_{r}^{n_{r}}|\label{2.3}\\ 
 & \ll c^{-N-1}e^{(\log2)(|a_1|+\cdots+|a_r|)}\nonumber\\
 &\quad\sum_{n_{1}\ge1}\left(\frac{\max\left\{|z_1|^{n_1-1}|z_1-1|^{-n_1},|z_1-1|^{-1}\right\}c^{n_1}}{(\log2)^{n_1}}\right)\cdots\sum_{n_{r}\ge1}\left(\frac{\max\left\{|z_r|^{n_r-1}|z_r-1|^{-n_r},|z_r-1|^{-1}\right\}c^{n_r}}{(\log2)^{n_r}}\right).\nonumber
\end{align}
Hence, by replacing $c>0$ with a smaller one if necessary, the above series converges in $[0,c]\times[0,1]^{r-1}$. 
Moreover, for a small $c>0$, the above series converges uniformly as a function of $(z_1,\ldots,z_r)$  in
$$
T_c:=\{(z_1,\ldots,z_r)\in\mathbb{C}^r\mid |z_j-1|>2\sqrt{c},\ |z_j|<(\log2)/\sqrt{c}\quad(j=1,\ldots,r)\}
$$
since $|z-1|>2\sqrt{c}$ and $|z|<(\log2)/\sqrt{c}$ yield $c|z|/(|z-1|\log2)<1/2$. Thus, series \eqref{2.3} can be continued to $T_c$.
Similarly, when $z_{p_1},\ldots,z_{p_{\xi}}\neq1$ and $z_{q_1}=\cdots=z_{q_{r-\xi}}=1$,  series \eqref{2.3} can be continued to some region contained in $(\mathbb{C}\setminus\{1\})^\xi$ as a function of $(z_{p_1},\ldots,z_{p_\xi})$ by Lemma \ref{estimate3}. 

Put
\[
F(x_{1},\dots,x_{r}):=\sum_{k>N}\sum_{n_{1}+\cdots+n_{r}=k,n_{1},\dots,n_{r}\ge0}(-1)^k\frac{B_{n_{1}}(a_1;z_1)\cdots B_{n_{r}}(a_{r};z_{r})}{n_{1}!\cdots n_{r}!}x_{1}^{k-N-1}x_{2}^{n_{2}+\cdots+n_{r}}\cdots x_{r}^{n_{r}}.
\]
From the above arguments, we see that $F(x_{1},\dots,x_{r})$ is holomorphic function on some region
containing $[0,c]\times[0,1]^{r-1}$. 
For $i_{2},\dots,i_{r}\in\{0,1/2\}$, put
\begin{align*}
\widetilde{Y}_{2}(i_{2},\dots,i_{r}) & :=\int_{0+i_{r}}^{1/2+i_{r}}\cdots\int_{0+i_{2}}^{1/2+i_{2}}\int_{0}^{c}x_{1}^{s_{1}+\cdots+s_{r}-r+N}x_{2}^{s_{2}+\cdots+s_{r}-(r-1)-1}\cdots x_{r}^{s_{r}-2}\\
 & \quad\quad(1-x_{2}){}^{s_{1}-1}\cdots(1-x_{r}){}^{s_{r-1}-1}F(x_{1},\dots,x_{r})dx_{1}\cdots dx_{r}.
\end{align*}
Note that 
\[
Y_{2}=\sum_{i_{2},\dots,i_{r}\in\{0,1/2\}}\widetilde{Y}_{2}(i_{2},\dots,i_{r}).
\]
If $i_{j}=0$ for all $j=2,\dots,r$, by repeating the integration by parts,
we have
\begin{align}
 & \int_{0}^{1/2}x_{j}^{s_{j}+\cdots+s_{r}-(r-j)-1}(1-x_{j}){}^{s_{j-1}-1}G(x_{1},\dots,x_{r})dx_{j}\nonumber \\
 & =\sum_{l=0}^{n}(-1)^{l}\frac{(1/2)^{s_{j}+\cdots+s_{r}-(r-j)+l}}{(s_{j}+\cdots+s_{r}-(r-j))_{l+1}}\cdot\left[\frac{d^{l}}{dx_{j}^{l}}((1-x_{j}){}^{s_{j-1}-1}G(x_{1},\dots,x_{r}))\right]_{x_{j}=1/2}\label{integration by part1}\\
 & \quad+(-1)^{n+1}\int_{0}^{1/2}\frac{x_{j}^{s_{j}+\cdots+s_{r}-(r-j)+n}}{(s_{j}+\cdots+s_{r}-(r-j))_{n+1}}\cdot\frac{d^{n+1}}{dx_{j}^{n+1}}((1-x_{j}){}^{s_{j-1}-1}G(x_{1},\dots,x_{r}))dx_{j},\nonumber 
\end{align}
where $G(x_{1},\dots,x_{r})$ be a holomorphic function on some region
containing $[0,c]\times[0,1]^{r-1}$. The last integral converges
if $\Re(s_{j}+\cdots+s_{r}-(r-j)+n)>-1$. The right-hand side is meromorphic
on the same region and the possible poles are simple poles located
on $s_{j}+\cdots+s_{r}=r-j-p\quad(p=0,\dots,n)$. Similarly, if $i_{j}=1/2$,
we also have
\begin{align}
 & \int_{1/2}^{1}x_{j}^{s_{j}+\cdots+s_{r}-(r-j)-1}(1-x_{j}){}^{s_{j-1}-1}G(x_{1},\dots,x_{r})dx_{j}\nonumber \\
 & =\sum_{l=0}^{n}\frac{(1/2)^{s_{j-1}+l}}{(s_{j-1})_{l+1}}\cdot\left[\frac{d^{l}}{dx_{j}^{l}}(x_{j}^{s_{j}+\cdots+s_{r}-(r-j)-1}G(x_{1},\dots,x_{r}))\right]_{x_{j}=1/2}\label{integration by part2}\\
 & \quad+\int_{1/2}^{1}\frac{(1-x_{j})^{s_{j-1}+n}}{(s_{j-1})_{n+1}}\cdot\frac{d^{n+1}}{dx_{j}^{n+1}}((1-x_{j}){}^{s_{j-1}-1}G(x_{1},\dots,x_{r}))dx_{j}.\nonumber 
\end{align}
The last integral converges if $\Re(s_{j-1}+n)>-1$. The right-hand
side is meromorphic on the same region and the possible poles are
simple poles at $s_{j-1}=-p\quad(p=0,\dots,n)$. Thus, for $i_{2},\dots,i_{r}\in\{0,1/2\}$,
we see that the function $\widetilde{Y}_{2}(i_{2},\dots,i_{r})$ can
be continued meromorphically to the region 
\begin{align*}
\{(s_{1},\dots,s_{r},z_{p_1},\ldots,z_{p_{\xi}})\mid&\Re(s_{j}+\cdots+s_{r}-(r-j)+n)>-1,\Re(s_{j-1}+n)>-1\quad(j=2,\dots,r),\\
&\Re(s_{1}+\cdots+s_{r}-r+N)>-1,\\
&|\lambda(z_{p_j})|>c,\ |z_{p_j}-1|>2\sqrt{c},\ |z_{p_j}|<(\log2)/\sqrt{c}\quad(j=1,\dots,\xi)\}
\end{align*}
when $z_{p_1},\ldots,z_{p_{\xi}}\neq1$ and $z_{q_1}=\cdots=z_{q_{r-\xi}}=1$.
For $i_{2},\dots,i_{r}\in\{0,1/2\}$, let
\[
U_{j}^{p}=U_{j}^{p}(i_{2},\dots,i_{r}):=\begin{cases}
\{(s_{1},\dots,s_{r})\mid s_{j}+\cdots+s_{r}=r-j-p\} & \text{if }i_{j}=0,\\
\{(s_{1},\dots,s_{r})\mid s_{j-1}=-p\} & \text{if }i_{j}=1/2.
\end{cases}
\]
The possible poles as a function of $(s_{1},\dots,s_{r})$ are simple poles located on the region
\[
\bigcup_{p=0}^{n}\bigcup_{j=2}^{r}U_{j}^p.
\]

Now we consider $X_{2}$. When $z_{p_1},\ldots,z_{p_{\xi}}\neq1$ and $z_{q_1}=\cdots=z_{q_{r-\xi}}=1$, we can easily check that $X_2$ can be continued to $(\mathbb{C}\setminus[1,\infty))^\xi$ as a function of $(z_{p_1},\ldots,z_{p_\xi})$. 
For $i_{2},\dots,i_{r}\in\{0,1/2\}$, put
\begin{align*}
\widetilde{X}_{2}(i_{2},\dots,i_{r}) & :=\int_{0+i_{r}}^{1/2+i_{r}}\cdots\int_{0+i_{2}}^{1/2+i_{2}}\int_{c}^{\infty}x_{1}^{s_{1}+\cdots+s_{r}-r-1}x_{2}^{s_{2}+\cdots+s_{r}-(r-1)-1}\cdots x_{r}^{s_{r}-2}\\
 & \quad(1-x_{2}){}^{s_{1}-1}\cdots(1-x_{r}){}^{s_{r-1}-1}\\
 & \quad\frac{x_1e^{-a_{1}x_{1}}}{1-z_{1}e^{-x_{1}}}\cdot\frac{x_1x_2e^{-a_{2}x_{1}x_{2}}}{1-z_{2}e^{-x_{1}x_{2}}}\cdot\cdots\cdot\frac{x_1x_2\cdots x_re^{-a_{r}x_{1}\cdots x_{r}}}{1-z_{r}e^{-x_{1}\cdots x_{r}}}dx_{1}\cdots dx_{r}.
\end{align*}
Note that 
\[
X_{2}=\sum_{i_{2},\dots,i_{r}\in\{0,1/2\}}\widetilde{X}_{2}(i_{2},\dots,i_{r}).
\]

Now we reset 
\[
F(x_{1},\dots,x_{r}):=\frac{x_1e^{-a_{1}x_{1}}}{1-z_{1}e^{-x_{1}}}\cdot\frac{x_1x_2e^{-a_{2}x_{1}x_{2}}}{1-z_{2}e^{-x_{1}x_{2}}}\cdot\cdots\cdot\frac{x_1x_2\cdots x_re^{-a_{r}x_{1}\cdots x_{r}}}{1-z_{r}e^{-x_{1}\cdots x_{r}}}.
\]
Then for $z_1,\ldots,z_r\in\mathbb{C}\setminus(1,\infty)$, we see that the function $F(x_{1},\dots,x_{r})$ is holomorphic
function on some region containing $[c,\infty)\times[0,1]^{r-1}$.
Since $\Re(a_1+\cdots+a_j)>0$ for all $j=1,\ldots,r$ and
\begin{align*}
&-a_{1}x_{1}-a_{2}x_{1}x_{2}-\cdots-a_{r}x_{1}\cdots x_{r}\\
&=-a_1x_1(1-x_2)-\cdots-(a_1+\cdots+a_{r-1})x_1\cdots x_{r-1}(1-x_r)-(a_1+\cdots+a_{r})x_1\cdots x_{r},
\end{align*}
the functions 
\[
\left|\Bigl(\frac{d}{dx_{2}}\Bigr)^{n_{2}}\cdots\Bigl(\frac{d}{dx_{r}}\Bigr)^{n_{r}}F(x_{1},\dots,x_{r})\right|
\]
decrease rapidly on $[c,\infty)\times[0,1]^{r-1}$ if $x_{1}$ tends
to $\infty$. From the similar argument in \eqref{integration by part1}
and \eqref{integration by part2}, we see that $\widetilde{X}_{2}(i_{2},\dots,i_{r})$
is meromorphic on 
\begin{align*}
\{(s_{1},\dots,s_{r},z_{p_1},\ldots,z_{p_{\xi}})\mid&\Re(s_{j}+\cdots+s_{r}-(r-j)+n)>-1,\Re(s_{j-1}+n)>-1\quad(j=2,\dots,r),\\
&z_{p_j}\in\mathbb{C}\setminus[1,\infty)\quad(j=1,\dots,\xi)\}
\end{align*}
when $z_{p_1},\ldots,z_{p_{\xi}}\neq1$ and $z_{q_1}=\cdots=z_{q_{r-\xi}}=1$.
The possible poles as a function of $(s_{1},\dots,s_{r})$ are simple poles located on the region 
\[
\bigcup_{p=0}^{n}\bigcup_{j=2}^{r}U_{j}^p.
\]

\section{Proof of Theorem \ref{main}}

In the previous section, we divide the Hurwitz-Lerch multiple zeta function as follows:
\begin{align*}
 & \zeta(s_{1},\ldots,s_{r};a_{1,}\ldots,a_{r};z_{1},\ldots,z_{r})\\
 & =\frac{1}{\Gamma(s_{1})\cdots\Gamma(s_{r})}(X_{1}+X_{2})\\
 & =\frac{1}{\Gamma(s_{1})\cdots\Gamma(s_{r})}(Y_{1}+Y_{2}+X_{2})\\
 & =\frac{1}{\Gamma(s_{1})\cdots\Gamma(s_{r})}\biggl(Y_{1}+\sum_{i_{2},\dots,i_{r}\in\{0,1/2\}}\widetilde{Y}_{2}(i_{2},\dots,i_{r})+\sum_{i_{2},\dots,i_{r}\in\{0,1/2\}}\widetilde{X}_{2}(i_{2},\dots,i_{r})\biggr).
\end{align*}

\subsection{Calculations for $Y_1$}
We consider $Y_{1}$, first. From \eqref{Y1}, we have
\begin{align*}
\frac{Y_{1}}{\Gamma(s_{1})\cdots\Gamma(s_{r})} & =\sum_{0\le k\le N}\sum_{n_{1}+\cdots+n_{r}=k,n_{1},\dots,n_{r}\ge0}(-1)^k\frac{B_{n_{1}}(a_1;z_1)\cdots B_{n_{r}}(a_{r};z_{r})}{n_{1}!\cdots n_{r}!}\\
 & \quad\cdot\frac{c^{k+s_{1}+\cdots+s_{r}-r}}{k+s_{1}+\cdots+s_{r}-r}\cdot\frac{1}{\Gamma(s_{r})}\\
 & \quad\cdot\frac{\Gamma(n_{2}+\cdots+n_{r}+s_{2}+\cdots+s_{r}-(r-1))}{\Gamma(n_{2}+\cdots+n_{r}+s_{1}+s_{2}+\cdots+s_{r}-(r-1))}\\
 & \quad\cdot\frac{\Gamma(n_{3}+\cdots+n_{r}+s_{3}+\cdots+s_{r}-(r-2))}{\Gamma(n_{3}+\cdots+n_{r}+s_{2}+\cdots+s_{r}-(r-2))}\cdots\\
 & \quad\cdot\frac{\Gamma(n_{r}+s_{r}-1)}{\Gamma(n_{r}+s_{r-1}+s_{r}-1)}.
\end{align*}
For $i=1,\dots,r$, let $s_{i}=-l_{i}+\epsilon_{i}$.
Putting 
\begin{align*}
H(l_{1},\dots,l_{r};n_{1},\dots,n_{r};\epsilon_{1},\dots,\epsilon_{r}) & :=\frac{c^{k-l(1,r)+\epsilon(1,r)-r}}{k-l(1,r)+\epsilon(1,r)-r}\cdot\frac{1}{\Gamma(-l_{r}+\epsilon_{r})}\\
 & \quad\cdot\frac{\Gamma(n(2,r)-l(2,r)+\epsilon(2,r)-(r-1))}{\Gamma(n(2,r)-l(1,r)+\epsilon(1,r)-(r-1))}\\
 & \quad\cdot\frac{\Gamma(n(3,r)-l(3,r)+\epsilon(3,r)-(r-2))}{\Gamma(n(3,r)-l(2,r)+\epsilon(2,r)-(r-2))}\\
 & \quad\cdots\\
 & \quad\cdot\frac{\Gamma(n_{r}-l_{r}+\epsilon_{r}-1)}{\Gamma(n_{r}-l_{r-1}-l_{r}+\epsilon_{r-1}+\epsilon_{r}-1)},
\end{align*}
we have
\begin{align}
&\frac{Y_{1}}{\Gamma(s_{1})\cdots\Gamma(s_{r})} \label{eq:Y1}\\
 &=\sum_{0\le k\le N}\sum_{n_{1}+\cdots+n_{r}=k,n_{1},\dots,n_{r}\ge0}(-1)^k\frac{B_{n_{1}}(a_1;z_1)\cdots B_{n_{r}}(a_{r};z_{r})}{n_{1}!\cdots n_{r}!}H(l_{1},\dots,l_{r};n_{1},\dots,n_{r};\epsilon_{1},\dots,\epsilon_{r}).\nonumber
\end{align}
Since
\begin{align*}
\frac{\Gamma(n(i+1,r)-l(i+1,r)+\epsilon(i+1,r)-(r-i))}{\Gamma(n(i+1,r)-l(i,r)+\epsilon(i,r)-(r-i))} & =\frac{(\epsilon(i+1,r))_{n(i+1,r)-l(i+1,r)-(r-i)}}{(\epsilon(i,r))_{n(i+1,r)-l(i,r)-(r-i)}}\cdot\frac{\Gamma(\epsilon(i+1,r))}{\Gamma(\epsilon(i,r))},
\end{align*}
we have
\begin{align*}
H(l_{1},\dots,l_{r};n_{1},\dots,n_{r};\epsilon_{1},\dots,\epsilon_{r}) & =\frac{c^{k-l(1,r)+\epsilon(1,r)-r}}{k-l(1,r)+\epsilon(1,r)-r}\cdot\frac{1}{(\epsilon_{r})_{-l_{r}}}\cdot\frac{1}{\Gamma(\epsilon(1,r))}\\
 & \quad\cdot\frac{(\epsilon(2,r))_{n(2,r)-l(2,r)-(r-1)}}{(\epsilon(1,r))_{n(2,r)-l(1,r)-(r-1)}}\\
 & \quad\cdot\frac{(\epsilon(3,r))_{n(3,r)-l(3,r)-(r-2)}}{(\epsilon(2,r))_{n(3,r)-l(2,r)-(r-2)}}\\
 & \quad\cdots\\
 & \quad\cdot\frac{(\epsilon(r,r))_{n(r,r)-l(r,r)-1}}{(\epsilon(r-1,r))_{n(r,r)-l(r-1,r)-1}},
\end{align*}
where $(\epsilon)_{n}:=\Gamma(n+\epsilon)/\Gamma(\epsilon)$. Note that 
\begin{align*}
(\epsilon)_{-n} & =\frac{1}{(\epsilon-1)\cdots(\epsilon-n)}=\frac{(-1)^{n}}{n!}+O(|\epsilon|)\quad(n\ge0),\\
(\epsilon)_{n} & =(\epsilon)\cdots(\epsilon+n-1)=\epsilon(n-1)!+O(|\epsilon|^{2})\quad(n>0).
\end{align*}
Let $N:=l(1,r)+r$. For $k<N$, we have
\begin{align*}
  \frac{c^{k-l(1,r)+\epsilon(1,r)-r}}{k-l(1,r)+\epsilon(1,r)-r}\cdot\frac{1}{(\epsilon_{r})_{-l_{r}}}\cdot\frac{1}{\Gamma(\epsilon(1,r))}
 & =\frac{c^{k+\epsilon(1,r)-N}}{k+\epsilon(1,r)-N}\cdot\frac{1}{(\epsilon_{r})_{-l_{r}}}\cdot\frac{1}{\Gamma(\epsilon(1,r))}\\
 & =O(|\epsilon(1,r)|) .
\end{align*}
If $k=N$, we have
\begin{align*}
  \frac{c^{k-l(1,r)+\epsilon(1,r)-r}}{k-l(1,r)+\epsilon(1,r)-r}\cdot\frac{1}{(\epsilon_{r})_{-l_{r}}}\cdot\frac{1}{\Gamma(\epsilon(1,r))}
 & =\frac{c^{\epsilon(1,r)}}{(\epsilon_{r})_{-l_{r}}}\cdot\frac{1}{\Gamma(\epsilon(1,r)+1)}\\
 & =\displaystyle(-1)^{l_{r}}l_{r}!+O(|\epsilon_{r}|)+O(|\epsilon(1,r)|).
\end{align*}
Note that $D:=n(j+1,r)-l(j,r)-(r-j)\le n(j+1,r)-l(j+1,r)-(r-j)=:U$
for $j=1,\dots,r-1$. If $D\le U\le0$, we have
\begin{align*}
\frac{(\epsilon(j+1,r))_{n(j+1,r)-l(j+1,r)-(r-j)}}{(\epsilon(j,r))_{n(j+1,r)-l(j,r)-(r-j)}} & =\frac{(-1)^{l_{j}}(-(n(j+1,r)-l(j,r)-(r-j)))!}{(-(n(j+1,r)-l(j+1,r)-(r-j)))!}+O(|\epsilon(j+1,r)|)+O(|\epsilon(j,r)|).
\end{align*}
If $D\le0<U$, we have
\begin{align*}
\frac{(\epsilon(j+1,r))_{n(j+1,r)-l(j+1,r)-(r-j)}}{(\epsilon(j,r))_{n(j+1,r)-l(j,r)-(r-j)}} & =O(|\epsilon(j+1,r)|).
\end{align*}
If $0<D\le U$, since $\left|\varepsilon_{k}/\varepsilon(j,r)\right|\ll1$ 
for $j=1,\ldots,r$ and $k=j,\ldots,r$, we have
\begin{align*}
&\frac{(\epsilon(j+1,r))_{n(j+1,r)-l(j+1,r)-(r-j)}}{(\epsilon(j,r))_{n(j+1,r)-l(j,r)-(r-j)}} \\
 & =\frac{\epsilon(j+1,r)}{\epsilon(j,r)}\left(\frac{(n(j+1,r)-l(j+1,r)-(r-j)-1)!}{(n(j+1,r)-l(j,r)-(r-j)-1)!}+O(|\epsilon(j+1,r)|)+O(|\epsilon(j,r)|)\right)\\
 & =\frac{\epsilon(j+1,r)}{\epsilon(j,r)}\cdot\frac{(n(j+1,r)-l(j+1,r)-(r-j)-1)!}{(n(j+1,r)-l(j,r)-(r-j)-1)!}+O(|\epsilon(j+1,r)|).
\end{align*}

Therefore we have
\begin{align*}
&\frac{Y_{1}}{\Gamma(s_{1})\cdots\Gamma(s_{r})} \\
 & =(-1)^{l(1,r)+r}\sum_{d_{1},\dots,d_{r-1}\in\{0,1\}}\sum_{(n_{1},\dots,n_{r})\in S^{(d_{1},\dots,d_{r-1})}}\frac{B_{n_{1}}(a_1;z_1)\cdots B_{n_{r}}(a_{r};z_{r})}{n_{1}!\cdots n_{r}!}h^{(d_{1},\dots,d_{r-1})}(n_{1},\dots,n_{r})\\
&\quad+\sum_{j=1}^rO(|\epsilon(j,r)|).
\end{align*}

\subsection{Calculations for $\widetilde{Y_{2}}$ and $\widetilde{X_{2}}$ }

Recall that all possible simple poles of $\widetilde{Y}_{2}(i_{2},\dots,i_{r})$ and $\widetilde{X}_{2}(i_{2},\dots,i_{r})$ are  located on the region
\[
\bigcup_{p=0}^{n}\bigcup_{j=2}^{r}U_{j}^p,
\]
where
\[
U_{j}^p=\begin{cases}
\{(s_{1},\dots,s_{r})\mid s_{j}+\cdots+s_{r}=r-j-p\} & \text{if }i_{j}=0,\\
\{(s_{1},\dots,s_{r})\mid s_{j-1}=-p\} & \text{if }i_{j}=1/2.
\end{cases}
\]
For $(s_{1},\dots,s_{r})=(-l_1+\epsilon_1,\dots,-l_r+\epsilon_r)$, we have
\[
\widetilde{Y}_{2}(i_{2},\dots,i_{r})\times(s(2)-q(2))\times\cdots\times(s(r)-q(r))=O(1)
\]
where
\[
s(j)-q(j):=\begin{cases}
s_{j}+\cdots+s_{r}+(l_{j}+\cdots+l_{r}) & \text{if }i_{j}=0,\\
s_{j-1}+l_{j-1} & \text{if }i_{j}=1/2.
\end{cases}
\]
Thus we have
\begin{align*}
\frac{\widetilde{Y}_{2}(i_{2},\dots,i_{r})}{\Gamma(s_{1})\cdots\Gamma(s_{r})}
 & =O\Biggl(|\epsilon_1\cdots\epsilon_r|\cdot\prod_{\substack{2\le j\le r\\i_{j}=0}}\frac{1}{|\epsilon(j,r)|}\cdot\prod_{\substack{2\le j\le r\\i_{j}=1/2}}\frac{1}{|\epsilon_{j-1}|}\Biggr)\\
  & =O\Biggl(|\epsilon_r|\cdot\prod_{\substack{2\le j\le r\\i_{j}=0}}\frac{|\epsilon_{j-1}|}{|\epsilon(j,r)|}\Biggr).
\end{align*}
Let $j_1,\ldots,j_t$ be all indices satisfying $i_{j_1},\ldots,i_{j_t}=0$ and $j_1<\cdots<j_t$. Then we have
\begin{align*}
\frac{\widetilde{Y}_{2}(i_{2},\dots,i_{r})}{\Gamma(s_{1})\cdots\Gamma(s_{r})}
 &=O\left(|\epsilon_r|\cdot\prod_{u=1}^t\frac{|\epsilon_{j_u-1}|}{|\epsilon(j_u,r)|}\right)\\
 &=O\left(|\epsilon_{j_1-1}|\cdot\prod_{u=1}^{t-1}\frac{|\epsilon_{j_{u+1}-1}|}{|\epsilon(j_u,r)|}\cdot\frac{|\epsilon_{r}|}{|\epsilon(j_t,r)|}\right)\\
  &=O\left(|\epsilon_{j_1-1}|\right)
\end{align*}
since $\left|\varepsilon_{k}/\varepsilon(j,r)\right|\ll1$ 
for $j=1,\ldots,r$ and $k=j,\ldots,r$. In a similar way, we can also estimate
\begin{align*}
\frac{\widetilde{X}_{2}(i_{2},\dots,i_{r})}{\Gamma(s_{1})\cdots\Gamma(s_{r})}
  &=O\left(|\epsilon_{j_1-1}|\right).
\end{align*}

\section{Appendix}
Here, we shall give some examples. Put $\bm{a}=(a_1,\ldots,a_r)$, $\bm{z}=(z_1,\ldots,z_r)$, and
\[
B_{(n_1,\ldots,n_r)}(\bm{a};\bm{z}):=\prod_{j=1}^rB_{n_j}(a_j;z_j)
\]
 for simplicity.

\begin{ex} When $r=2$, we have
\begin{align*}
\zeta(&-1+\epsilon_1,\epsilon_2;a_1,a_2;z_1,z_2) \\
&\qquad=\frac{1}{2} B_{(2,1)}(\bm{a};\bm{z})
+\frac{1}{3} B_{(3,0)}(\bm{a};\bm{z})-\frac{1}{6} B_{(0,3)}(\bm{a};\bm{z}) \frac{\epsilon_{2}}{\epsilon_{1}+\epsilon_{2}}+\sum_{j=1}^{2}O(|\epsilon_{j}|), \\
\zeta(&\epsilon_1,-1+\epsilon_2;a_1,a_2;z_1,z_2) \\
&\qquad=\frac{1}{2} B_{(2,1)}(\bm{a};\bm{z})
+\frac{1}{2} B_{(1,2)}(\bm{a};\bm{z})
+\frac{1}{6} B_{(3,0)}(\bm{a};\bm{z}) +\frac{1}{6} B_{(0,3)}(\bm{a};\bm{z}) \frac{\epsilon_{2}}{\epsilon_{1}+\epsilon_{2}}+\sum_{j=1}^{2}O(|\epsilon_{j}|), \\
 \zeta(&-1+\epsilon_1,-1+\epsilon_2;a_1,a_2;z_1,z_2) \\
&\qquad=\frac{1}{4} B_{(2,2)}(\bm{a};\bm{z})
+\frac{1}{3} B_{(3,1)}(\bm{a};\bm{z})
+\frac{1}{8} B_{(4,0)}(\bm{a};\bm{z}) -\frac{1}{24} B_{(0,4)}(\bm{a};\bm{z})  \frac{\epsilon_{2}}{\epsilon_{1}+\epsilon_{2}}+\sum_{j=1}^{2}O(|\epsilon_{j}|),
\end{align*}
where the Apostol-Bernoulli polynomials for $0\le n\le 4$ are as follows:
 \begin{align*}
&B_{0}(a;z) =0,
\qquad B_{1}(a;z) =\frac{1}{z-1},
\qquad B_{2}(a;z) =\frac{2}{z-1}a-\frac{2 z}{(z-1)^2},\\
&B_{3}(a;z) =\frac{3}{z-1}a^2-\frac{6 z}{(z-1)^2}a+\frac{3 z(z+1)}{(z-1)^3},\\
& B_{4}(a;z) =\frac{4}{z-1}a^3-\frac{12 z }{(z-1)^2}a^2+\frac{12  z(z+1)}{(z-1)^3}a-\frac{4 z(z^2+4z+ 1)}{(z-1)^4}.
\end{align*}
\end{ex}

\begin{ex}When $r=3$, we have
\begin{align*}
&\zeta(\epsilon_1,\epsilon_2,\epsilon_3;a_1,a_2,a_3;z_1,z_2,z_3) \\
&=-B_{(1,1,1)}(\bm{a};\bm{z}) 
-\frac{1}{2} B_{(2,0,1)}(\bm{a};\bm{z}) 
-\frac{1}{2} B_{(2,1,0)}(\bm{a};\bm{z})  
-\frac{1}{2} B_{(1,2,0)}(\bm{a};\bm{z}) 
-\frac{1}{6} B_{(3,0,0)}(\bm{a};\bm{z})\\
&\quad -\frac{1}{2} B_{(1,0,2)}(\bm{a};\bm{z}) \frac{\epsilon_{3}}{\epsilon_{2}+\epsilon_{3}}  -\biggl(\frac{1}{2} B_{(0,2,1)}(\bm{a};\bm{z})  +\frac{1}{6} B_{(0,3,0)}(\bm{a};\bm{z}) \biggr)\frac{\epsilon_{2}+\epsilon_3}{\epsilon_{1}+\epsilon_{2}+\epsilon_{3}}\\
&\quad -\biggl(\frac{1}{2} B_{(0,1,2)}(\bm{a};\bm{z}) +\frac{1}{6} B_{(0,0,3)}(\bm{a};\bm{z}) \biggr)\frac{\epsilon_{3}}{\epsilon_{1}+\epsilon_{2}+\epsilon_{3}}+\sum_{j=1}^{3}O(|\epsilon_{j}|), 
\end{align*}
\begin{align*}
&\zeta(-1+\epsilon_1,\epsilon_2,\epsilon_3;a_1,a_2,a_3;z_1,z_2,z_3)\\
&=-\frac{1}{2} B_{(2,1,1)}(\bm{a};\bm{z}) 
-\frac{1}{4} B_{(2,2,0)}(\bm{a};\bm{z}) 
-\frac{1}{3} B_{(3,1,0)}(\bm{a};\bm{z}) 
 -\frac{1}{3} B_{(3,0,1)}(\bm{a};\bm{z}) 
-\frac{1}{8} B_{(4,0,0)}(\bm{a};\bm{z}) \\
&\quad -\frac{1}{4} B_{(2,0,2)}(\bm{a};\bm{z})  \frac{\epsilon_{3}}{\epsilon_{2}+\epsilon_{3}} 
 +\biggl(\frac{1}{6} B_{(0,3,1)}(\bm{a};\bm{z})  +\frac{1}{24} B_{(0,4,0)}(\bm{a};\bm{z}) \biggr) \frac{\epsilon_{2}+\epsilon_{3}}{\epsilon_{1}+\epsilon_{2}+\epsilon_{3}} \\
&\quad +\biggl(\frac{1}{4} B_{(0,2,2)}(\bm{a};\bm{z}) 
 +\frac{1}{6} B_{(0,1,3)}(\bm{a};\bm{z})  
 +\frac{1}{24} B_{(0,0,4)}(\bm{a};\bm{z}) \biggr)  \frac{\epsilon_{3}}{\epsilon_{1}+\epsilon_{2}+\epsilon_{3}}+\sum_{j=1}^{3}O(|\epsilon_{j}|) ,
\end{align*}
\begin{align*}
&\zeta(\epsilon_1,-1+\epsilon_2,\epsilon_3;a_1,a_2,a_3;z_1,z_2,z_3) \\
&=-\frac{1}{2} B_{(2,1,1)}(\bm{a};\bm{z}) 
 -\frac{1}{2} B_{(2,2,0)}(\bm{a};\bm{z}) -\frac{1}{3} B_{(3,1,0)}(\bm{a};\bm{z}) 
 -\frac{1}{6} B_{(3,0,1)}(\bm{a};\bm{z}) 
 -\frac{1}{2} B_{(1,2,1)}(\bm{a};\bm{z}) \\
&\quad 
 -\frac{1}{3} B_{(1,3,0)}(\bm{a};\bm{z}) -\frac{1}{12} B_{(4,0,0)}(\bm{a};\bm{z}) \\
&\quad +\frac{1}{6} B_{(1,0,3)}(\bm{a};\bm{z})  \frac{\epsilon_{3}}{\epsilon_{2}+\epsilon_{3}} 
-\biggl(\frac{1}{6} B_{(0,3,1)}(\bm{a};\bm{z}) + \frac{1}{12} B_{(0,4,0)}(\bm{a};\bm{z})  \biggr)\frac{\epsilon_{2}+\epsilon_{3}}{\epsilon_{1}+\epsilon_{2}+\epsilon_{3}} \\
&\quad +\biggl(\frac{1}{6} B_{(0,1,3)}(\bm{a};\bm{z})  +\frac{1}{12} B_{(0,0,4)}(\bm{a};\bm{z}) \biggr) \frac{\epsilon_{3}}{\epsilon_{1}+\epsilon_{2}+\epsilon_{3}}+\sum_{j=1}^{3}O(|\epsilon_{j}|),
\end{align*}
\begin{align*}
&\zeta(\epsilon_1,\epsilon_2,-1+\epsilon_3;a_1,a_2,a_3;z_1,z_2,z_3) \\
&=-\frac{1}{2} B_{(2,1,1)}(\bm{a};\bm{z}) 
 -\frac{1}{4} B_{(2,2,0)}(\bm{a};\bm{z}) 
 -\frac{1}{4} B_{(2,0,2)}(\bm{a};\bm{z})
 -\frac{1}{2} B_{(1,2,1)}(\bm{a};\bm{z}) 
 -\frac{1}{2} B_{(1,1,2)}(\bm{a};\bm{z}) \\
&\quad -\frac{1}{6} B_{(3,1,0)}(\bm{a};\bm{z}) 
 -\frac{1}{6} B_{(3,0,1)}(\bm{a};\bm{z}) 
 -\frac{1}{6} B_{(1,3,0)}(\bm{a};\bm{z}) 
 -\frac{1}{24} B_{(4,0,0)}(\bm{a};\bm{z}) \\
&\quad -\frac{1}{6} B_{(1,0,3)}(\bm{a};\bm{z}) \frac{\epsilon_{3}}{\epsilon_{2}+\epsilon_{3}} \\
&\quad -\biggl(\frac{1}{4} B_{(0,2,2)}(\bm{a};\bm{z}) +\frac{1}{6} B_{(0,3,1)}(\bm{a};\bm{z}) +\frac{1}{24} B_{(0,4,0)}(\bm{a};\bm{z}) \biggr) \frac{\epsilon_{2}+\epsilon_{3}}{\epsilon_{1}+\epsilon_{2}+\epsilon_{3}} \\
&\quad -\biggl(\frac{1}{6} B_{(0,1,3)}(\bm{a};\bm{z}) +\frac{1}{24} B_{(0,0,4)}(\bm{a};\bm{z}) \biggr) \frac{\epsilon_{3}}{\epsilon_{1}+\epsilon_{2}+\epsilon_{3}} +\sum_{j=1}^{3}O(|\epsilon_{j}|).
\end{align*}
\end{ex}

\begin{ex}When $r=4$, we have
\begin{align*}
&\zeta(\epsilon_1,\epsilon_2,\epsilon_3,\epsilon_4;a_1,a_2,a_3,a_4;z_1,z_2,z_3,z_4) \\
&=B_{(1,1,1,1)}(\bm{a};\bm{z}) 
 +\frac{1}{2} B_{(2,1,1,0)}(\bm{a};\bm{z}) 
 +\frac{1}{2} B_{(2,1,0,1)}(\bm{a};\bm{z})
 +\frac{1}{2} B_{(2,0,1,1)}(\bm{a};\bm{z})
 +\frac{1}{2} B_{(1,2,1,0)}(\bm{a};\bm{z}) \\
&\quad 
 +\frac{1}{2} B_{(1,2,0,1)}(\bm{a};\bm{z})
 +\frac{1}{4} B_{(2,2,0,0)}(\bm{a};\bm{z})
 +\frac{1}{2} B_{(1,1,2,0)}(\bm{a};\bm{z})
 +\frac{1}{4} B_{(2,0,2,0)}(\bm{a};\bm{z})
 +\frac{1}{6} B_{(3,1,0,0)}(\bm{a};\bm{z})\\
&\quad 
 +\frac{1}{6} B_{(3,0,1,0)}(\bm{a};\bm{z})
 +\frac{1}{6} B_{(3,0,0,1)}(\bm{a};\bm{z})
 +\frac{1}{6} B_{(1,3,0,0)}(\bm{a};\bm{z})
 +\frac{1}{24} B_{(4,0,0,0)}(\bm{a};\bm{z}) \\
&\quad +\biggl(
  \frac{1}{2} B_{(1,1,0,2)}(\bm{a};\bm{z})
 +\frac{1}{4} B_{(2,0,0,2)}(\bm{a};\bm{z})
  \biggr) \frac{\epsilon_{4}}{\epsilon_{3}+\epsilon_{4}} \\
&\quad+\biggl(
   \frac{1}{2} B_{(1,0,2,1)}(\bm{a};\bm{z})
  +\frac{1}{6} B_{(1,0,3,0)}(\bm{a};\bm{z})
   \biggr) \frac{\epsilon_{3}+\epsilon_{4}}{\epsilon_{2}+\epsilon_{3}+\epsilon_{4}}  \\
&\quad+\biggl(
 \frac{1}{2} B_{(1,0,1,2)}(\bm{a};\bm{z}) 
 +\frac{1}{6} B_{(1,0,0,3)}(\bm{a};\bm{z})
 \biggr) \frac{\epsilon_{4}}{\epsilon_{2}+\epsilon_{3}+\epsilon_{4}} \\
&\quad +\biggl(
 \frac{1}{2} B_{(0,2,1,1)}(\bm{a};\bm{z})
  +\frac{1}{4} B_{(0,2,2,0)}(\bm{a};\bm{z}) 
  +\frac{1}{6} B_{(0,3,0,1)}(\bm{a};\bm{z})
  +\frac{1}{6} B_{(0,3,1,0)}(\bm{a};\bm{z}) \\
  &\qquad\qquad
  +\frac{1}{24} B_{(0,4,0,0)}(\bm{a};\bm{z}) 
 \biggr)  \frac{\epsilon_{2}+\epsilon_{3}+\epsilon_{4}}{\epsilon_{1}+\epsilon_{2}+\epsilon_{3}+\epsilon_{4}} \\
&\quad +\biggl(
 \frac{1}{2} B_{(0,1,2,1)}(\bm{a};\bm{z}) 
 +\frac{1}{6} B_{(0,1,3,0)}(\bm{a};\bm{z}) 
 +\frac{1}{6} B_{(0,0,3,1)}(\bm{a};\bm{z}) 
 +\frac{1}{24} B_{(0,0,4,0)}(\bm{a};\bm{z}) 
  \biggr)  \frac{\epsilon_{3}+\epsilon_{4}}{\epsilon_{1}+\epsilon_{2}+\epsilon_{3}+\epsilon_{4}} \\
&\quad +\biggl(
 \frac{1}{2} B_{(0,1,1,2)}(\bm{a};\bm{z}) 
 +\frac{1}{4} B_{(0,0,2,2)}(\bm{a};\bm{z}) 
 +\frac{1}{6} B_{(0,1,0,3)}(\bm{a};\bm{z}) 
 +\frac{1}{6} B_{(0,0,1,3)}(\bm{a};\bm{z}) \\
  &\qquad\qquad
 +\frac{1}{24} B_{(0,0,0,4)}(\bm{a};\bm{z}) 
  \biggr)  \frac{\epsilon_{4}}{\epsilon_{1}+\epsilon_{2}+\epsilon_{3}+\epsilon_{4}} \\
&\quad +\frac{1}{4} B_{(0,2,0,2)}(\bm{a};\bm{z}) 
 \frac{ \epsilon_{4}(\epsilon_{2}+\epsilon_{3} +\epsilon_{4}) }{(\epsilon_{3}+\epsilon_{4}) (\epsilon_{1}+\epsilon_{2}+\epsilon_{3}+\epsilon_{4})}.
\end{align*}
\end{ex}

\section*{Acknowledgements}
This work was supported by JSPS KAKENHI Grant Number JP19K14511.


\end{document}